\documentclass[twoside, 12pt]{article}
\usepackage{a4wide, amsthm, amssymb, amscd, mathptmx}

\usepackage{amsfonts}

\usepackage[all]{xy}\CompileMatrices\SelectTips{cm}{12}

\theoremstyle{plain}
\newtheorem{Thm}{\sc Theorem}[section]
\newtheorem{Theorem}[Thm]{\sc Theorem}
\newtheorem*{Theorem*}{\sc Theorem}
\newtheorem{Corollary}[Thm]{\sc Corollary}
\newtheorem*{Lemma*}{\sc Lemma}

\newtheorem*{Proposition*}{\sc Proposition}
\newtheorem{Lemma}[Thm]{\sc Lemma}

\theoremstyle{definition}

\theoremstyle{remark}

\newtheorem{Example}[Thm]{Example}
\newtheorem*{Example*}{Example}
\newtheorem*{Remark*}{Remark}

%%%%%%%%%%%%%%

\newcommand{\K}{{\mathcal K}}
\newcommand{\cL}{{\mathcal L}}

\renewcommand{\O}{{\mathcal O}}

\newcommand{\X}{{\mathcal X}}

\newcommand{\PP}{\mathbb{P}}

\newcommand{\ZZ}{\mathbb{Z}}

\newcommand{\can}{\mathop{\rm can}}

\newcommand{\Pic}{{\mathop{\rm Pic \,}}}

\newcommand{\rk}{\mathop{\rm rk}}

\newcommand{\Spec}{\mathop{\rm Spec}}

\begin{document}

\pagestyle{myheadings} \markboth{\rm A.\ Langer}{\rm Restriction
theorem for strong stability}

\title{{\large A note on restriction theorems for semistable
sheaves}}
\author{Adrian Langer}
\date{\today}

%%%
\maketitle
%%%

{\sc Addresses:} 1. Institute of Mathematics, Warsaw University,
ul.\ Banacha 2,
02-097 Warszawa, Poland,\\

2. Institute of Mathematics, Polish Academy of Sciences, ul.
\'Sniadeckich 8, 00--956 Warszawa, Poland, e-mail: {\tt
alan@mimuw.edu.pl}

\section*{Introduction}

The main result of this note is a generalization of the following theorem from
the characteristic zero case to positive characteristic.

\begin{Theorem} \label{rest-0}
Let $X$ be a smooth projective  variety  of dimension $n$ defined
over an algebraically closed field $k$ of characteristic $0$. Let
$H$ be a very ample divisor on $X$ and let $E$ be an
$H$-semistable torsion free sheaf of rank $r\ge 2$ on $X$. Let us
take an integer $a$ such that
$${{a+n \choose n}} > \frac{1}{2}
\left( {\max\{\frac{r^2-1}{4},1 \}}H^n +1\right) \Delta (E)H^{n-2}
+1 .$$ Then for a general divisor $D\in |aH|$ the restriction
$E_D$ is $H_D$-semistable.
\end{Theorem}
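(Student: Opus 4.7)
The plan is to argue by contradiction along the lines that go back to Flenner. Suppose that for a general $D \in |aH|$ the restriction $E_D$ is not $H_D$-semistable, and let $F_D \subset E_D$ be a maximal destabilizing subsheaf in its Harder--Narasimhan filtration; then $F_D$ is itself $H_D$-semistable with $\mu_{H_D}(F_D) > \mu_{H_D}(E_D)$. I would work on the incidence variety $Z = \{(x,D) : x \in D\} \subset X \times |aH|$ with projections $p : Z \to X$ and $q : Z \to |aH|$. By boundedness of the possible numerical types of $F_D$ (the rank $r'$ is strictly between $0$ and $r$, and the first Chern class is bounded via $\Delta(E)$), after shrinking $|aH|$ to a dense open subset $U$ the $F_D$ assemble into a flat family $\F \hookrightarrow p^{*}E|_{q^{-1}(U)}$, coming from a relative Quot scheme over $U$.

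Next, I would extract from $\F$ a global destabilizer on $X$. Pushing $\F$ forward along $p$ and taking the saturation inside $E$ (well-defined away from a subset of codimension two) produces a reflexive subsheaf $F \subset E$ of rank $r'$ with $F|_D \subset F_D$ for a general $D \in U$. The $H$-semistability of $E$ forces $\mu_H(F) \le \mu_H(E)$, whereas $\mu_{H_D}(F_D) > \mu_{H_D}(E_D)$ by construction, so the inclusion $F|_D \subset F_D$ is strict and the quotient $F_D/F|_D$ is a nonzero coherent sheaf on $D$ whose first Chern class along $H_D$ encodes the slope jump quantitatively.

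For the third step I would combine the Hodge index theorem on $X$ with Bogomolov's inequality $\Delta(E)H^{n-2} \ge 0$ (available for $H$-semistable torsion-free sheaves in characteristic zero) to bound the square of the slope jump from above by a quantity involving $\Delta(E)H^{n-2}$, $H^n$ and $r$; optimising the prefactor over the admissible rank $r' \in \{1, \dots, r-1\}$ produces the constant $\max\{(r^2-1)/4,\,1\}$. A matching lower bound on the slope jump, obtained by applying Riemann--Roch and the Koszul-type short exact sequence $0 \to E(-aH) \to E \to E_D \to 0$ to the family $\F$ (where the factor ${a+n \choose n}$ enters through the Euler characteristic of $\O_X(aH)$), then combines with the upper bound to produce precisely the inequality on ${a+n \choose n}$ stated in the theorem.

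The main obstacle will be the second step: producing the global reflexive subsheaf $F \subset E$ from the family $\F$, and ensuring that its discriminant is controlled by $\Delta(E)$ with the sharp constant. This requires delicate use of flattening stratifications and reflexive hulls, since fibrewise saturations may jump along $U$, and the constant $(r^2-1)/4$ must be tracked carefully through the Hodge-index optimisation over the rank of the destabiliser; any slack at this stage would only yield a weaker bound than the one asserted.
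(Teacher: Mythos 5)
There is a genuine gap, and it is exactly at your second step: the global destabilizer $F\subset E$ you want to extract from the family $\F$ does not exist in general. The fibres of $p:Z\to X$ over a point $x\in X$ are positive-dimensional linear systems of divisors through $x$, so $p_*\F$ is not a rank-$r'$ subsheaf of $E$; to obtain a subsheaf of $E$ you would need $\F$ to be (generically) a pullback $p^*F$, i.e.\ you would need the relative family of maximal destabilizing subsheaves to \emph{descend} from $Z$ to $X$, and this descent is precisely the obstruction that restriction theorems are about. If your step worked, then semistability of $E$ would immediately contradict instability of $E_D$ for \emph{every} $a$, with no numerical hypothesis at all --- but this is false, as the restriction of $T_{\PP^2}$ to a line shows. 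The failure of descent must be measured, not assumed away: in the paper this is done via the relative Harder--Narasimhan filtration on the incidence variety and the canonical (in characteristic $p$, Cartier/Frobenius; in characteristic $0$, the analogous Grauert--M\"ulich) mechanism, which produces a nonzero map $E_{i_0}\to \Omega_{Z'/X}\otimes (q^*E)/E_{i_0}$ and hence the bound $\mu_{i_0}-\mu_{i_0+1}\le \mu_{\max}((\Omega_{Z'/X})_{Z_t})$; the binomial ${a+n\choose n}$ enters here through Flenner's bound on $\mu_{\max}$ of the restricted relative cotangent bundle, via semistability of the syzygy bundle $K_a$ (Theorem \ref{Brenner}), not through a Riemann--Roch computation for $E$. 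Likewise the constant $\max\{\frac{r^2-1}{4},1\}$ comes from integrality of the degrees in the relative filtration (each $\deg E^i_{Z_t}$ is divisible by $a$, so consecutive slopes differ by at least $a/\max\{\frac{r^2-1}{4},1\}$), not from a Hodge-index optimisation over the rank of the destabilizer.

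The other half of your sketch also does not match what is needed: the quantitative use of Bogomolov's inequality in the paper is not applied to a putative destabilizer of $E$, but to the elementary transformation $G=\ker(E\to E_D\to T)$, where $T$ is the minimal destabilizing quotient of $E_D$ (in the characteristic-$p$ proof, along the non-reduced divisor $p^sD$). Computing $\Delta(G)$ from the two exact sequences $0\to G\to E\to T\to 0$ and $0\to E(-aH)\to G\to S\to 0$ and bounding $\mu_{\max}(G)-\mu(G)$ and $\mu(G)-\mu_{\min}(G)$ by semistability of $E$ and $E(-aH)$ yields $2a\le \Delta(E)H^{n-2}$ when restriction fails; combined with the Flenner-type dichotomy above this gives precisely the stated bound. (The paper in fact deduces Theorem \ref{rest-0} either from the positive-characteristic Theorem \ref{rest-ss} by reduction modulo large primes, or from Theorem \ref{Bog-rest-p} together with Flenner's theorem.) As written, your proposal is missing the descent/Grauert--M\"ulich step entirely and replaces the elementary-transformation argument by an unproved gluing claim, so the argument would fail at the very place you yourself identify as the main obstacle.
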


To avoid introducing a complicated notation we defer a precise
formulation of the generalization of Theorem \ref{rest-0} to
Section 2 (see Theorem \ref{rest-ss}). The above stated theorem is
also new although a slightly weaker result can be obtained by
combining Flenner's restriction theorem and the author's version
of Bogomolov's restriction theorem (see \cite[Theorem 5.2]{La1}).
In positive characteristic the analogue of Theorem \ref{rest-0}
(i.e., Theorem \ref{rest-ss}) is new even in the well-known and
very much studied case of rank $2$ vector bundles on $\PP ^2$.

Obviously, the theorem in positive characteristic implies the
characteristic zero version by usual arguments involving reduction
modulo large characteristic. Hence in the following we consider
only the positive characteristic version. In positive
characteristic $p$, the theorem improves earlier bounds on degree
of hypersurfaces for which the restriction is semistable from
roughly the discriminant to the $n$-th root of discriminant. In
particular, it shows that for every strongly semistable sheaf $E$
(of rank $r<p+2$) on a variety of dimension at least $2$ there
exists a computable integer $a_0$ such that for all integers
$a>a_0$ the restriction of $(F^m)^*E$ is semistable on the generic
hypersurface of degree $ap^m$. This generalizes the main result of
\cite{MT} (see \cite[Theorem 2.7]{MT}), where a similar theorem
was proven for some special classes of varieties. In fact, for
varieties considered in \cite{MT} (and many more) we prove that
there exists a restriction theorem for strong semistability (see
Theorem \ref{rest-sss}).

The proof of our theorem is a combination of proofs of the
Grauert--M\"ulich, Flenner's and Bogomolov's restriction theorems.
The main new features are a non-separable descent and the use of
author's version of Bogomolov's inequality in positive
characteristic (see Theorem \ref{Bogomolov}) to elementary
transformations along non-reduced divisors.

\medskip

The paper is organized as follows. In Section 1 we recall a few
basic results. In Section 2 we prove the main result: a new
restriction theorem for semistability. In Section 3 we prove a
restriction theorem for strong semistability on varieties with
certain non-negativity constrains on the cotangent bundle (this
class includes, e.g., non-supersingular K3 surfaces and most of
Fano and Calabi--Yau varieties).

\medskip

\subsection{Notation and conventions}

Let $S$ be a scheme defined over an algebraically closed field $k$
of characteristic $p$. Then by $F_S^r :S\to S$ we denote the
$r$-th \emph{absolute Frobenius morphism} of $S$ (i.e., the
$p^r$-th power mapping on $\O _S$).

Let $X$ be an $S$-scheme and let $X^{(r)}_S$ denote the fiber
product of $X$ and $S$ over the $r$-th Frobenius morphism $F^r_S:
S\to S $.  The $r$-th absolute Frobenius morphism of $X$ induces
the $r$-th \emph{relative Frobenius morphism} $F^r_{X/S}: X\to
X^{(r)}_S$. This morphism decomposes into a composition of $r$
relative Frobenius morphisms $F^r_{X/S}=F_{X^{(r-1)}_S/S} \dots
 F_{X^{(1)}_S/S}  F_{X/S}.$ For a morphism $\pi: X\to S$
the induced morphism $X^{(r)}_S\to S$ is denoted by $\pi
^{(r)}_S$. The above notation is best illustrated by the following
diagram:
$$ \xymatrix{ &X\ar[rd]_{\pi}\ar@(ur,ul)[rr]^{\mbox{\scriptsize $F_{X}^r$}}\ar[r]^{\mbox{\tiny $F_{X/S}^r$}}&
{X_S^{(r)}}\ar[d]^{\mbox{\scriptsize$\pi^{(r)}_S$}} \ar[r] & X\ar[d]^{\pi}\\
&&S\ar[r]^{\mbox{\scriptsize $F_S^r$}}&S\\}$$ When $S=\Spec k$
then to simplify notation we denote $X^{(r)}_S$ by $X^{(r)}$. As
we will need a few different Frobenius morphisms we will use a
slightly inconsistent notation and for a morphism $\pi: X\to S$,
$\pi ^{(r)}$ will denote not the morphism  $\pi ^{(r)}_k$ but the
morphism $X^{(r)}\to S^{(r)}$ induced by $\pi$.

Semistability in this paper will always mean slope semistability with
respect to the considered ample divisor.

\section{Preliminaries}

In this section we gather a few auxiliary results.

\medskip

Let $(X, H)$ be a smooth polarized variety of dimension $n$
defined over an algebraically closed field $k$ of characteristic
$p$. Let $E$ be a rank $r$ torsion free sheaf on $X$. Then one can
define the \emph{slope} of $E$ by $\mu (E)=c_1E\cdot H ^{n-1}/r$.
 The slope of a maximal destabilizing
subsheaf of $E$ is denoted by $\mu _{\max}(E)$ and that of minimal
destabilizing quotient by $\mu _{\min} (E)$. The
\emph{discriminant} of $E$ is defined by $\Delta
(E)=2rc_2(E)-(r-1)c_1^2(E)$.

Let us recall that in positive characteristic the degree of
instability of Frobenius pull backs of $E$ is measured by the
following (well-defined and finite) number:
$$L_{\max} (E) =\lim _{k\to \infty} \frac{\mu _{\max} ((F^k)^*E)}{p^k}.$$
 Let us set
$$L_X= \left\{\begin{array} {ll} \frac{L_{\max}(\Omega_X)}{p}&\quad
\hbox{if $\mu_{\max}(\Omega _X)>0$,}\\ 0 &\quad
\hbox{if $\mu_{\max}(\Omega _X)\le 0$.}\\
\end{array}
\right.$$

The following theorem is a slight strengthening of \cite[Theorem
5.1]{La1}. It can be proved in the same way except that at the end
of proof one needs to use \cite[Theorem 2.17]{La2} instead of
\cite[Theorem 3.3]{La1}.

\begin{Theorem} \label{Bogomolov}
For an arbitrary torsion free sheaf $E$ we have the following
inequality:
$$H^n\cdot
\Delta (E)H^{n-2}+ +r^2(\mu_{\max} (E)-\mu (E)) (\mu (E)
-\mu_{\min} (E) ) +
 \frac{r^2(r-1)^2}{4} {L_X}^2 \ge 0.$$
\end{Theorem}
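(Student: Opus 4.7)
I plan to adapt the proof of \cite[Theorem 5.1]{La1} verbatim, changing only its final appeal to \cite[Theorem 3.3]{La1} into an application of the sharper semistable Bogomolov estimate \cite[Theorem 2.17]{La2}. The strategy is to reduce from an arbitrary torsion free $E$ to its semistable pieces via the Harder--Narasimhan filtration. Concretely, I would take the HN filtration $0=E_0\subset E_1\subset\cdots\subset E_m=E$ with semistable graded pieces $F_i=E_i/E_{i-1}$ of rank $r_i$ and slope $\mu_i$, so that $\mu_1=\mu_{\max}(E)$ and $\mu_m=\mu_{\min}(E)$.

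The starting point is the iterated discriminant identity
$$\frac{\Delta(E)}{r}\;=\;\sum_{i}\frac{\Delta(F_i)}{r_i}\;-\;\frac{1}{r}\sum_{i<j}r_ir_j\,\xi_{ij}^2,\qquad \xi_{ij}:=\frac{c_1(F_i)}{r_i}-\frac{c_1(F_j)}{r_j},$$
obtained by iterating the short exact sequence formula for $\Delta$ (or by a direct verification using additivity of the Chern character). Multiplying by $rH^n$, intersecting with $H^{n-2}$, and applying the Hodge index inequality $H^n\cdot \xi^2 H^{n-2}\le (\xi\cdot H^{n-1})^2$ to each $\xi_{ij}$ (whose top intersection is $\mu_i-\mu_j$) turns the computation into a lower bound involving $\sum_i (r/r_i)\,H^n\Delta(F_i)H^{n-2}$ and $-\sum_{i<j}r_ir_j(\mu_i-\mu_j)^2$.

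Two elementary bookkeeping estimates then finish the job. First, the identity $\sum_{i<j}r_ir_j(\mu_i-\mu_j)^2=r\sum_i r_i(\mu_i-\mu(E))^2$, combined with the pointwise bound $x^2\le \alpha\beta+(\alpha-\beta)x$ on $[-\beta,\alpha]$ (applied to $x=\mu_i-\mu(E)$ with $\alpha=\mu_{\max}(E)-\mu(E)$ and $\beta=\mu(E)-\mu_{\min}(E)$) together with $\sum r_i(\mu_i-\mu(E))=0$, yields $\sum_{i<j}r_ir_j(\mu_i-\mu_j)^2\le r^2(\mu_{\max}(E)-\mu(E))(\mu(E)-\mu_{\min}(E))$, which supplies the middle term of the statement. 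Second, the numerical inequality $\sum_i r_i(r_i-1)^2\le r(r-1)^2$ (which follows from $(a+b)(a+b-1)^2\ge a(a-1)^2+b(b-1)^2$ for integers $a,b\ge 1$), combined with \cite[Theorem 2.17]{La2} applied to each semistable $F_i$ in the form $H^n\Delta(F_i)H^{n-2}+\frac{r_i^2(r_i-1)^2}{4}L_X^2\ge 0$, yields the $L_X^2$ term with the correct constant $\frac{r^2(r-1)^2}{4}$.

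The only substantive ingredient is \cite[Theorem 2.17]{La2}; all the bookkeeping sketched above already appears in the proof of \cite[Theorem 5.1]{La1}. The slight strengthening here is precisely the shadow, through this bookkeeping, of the sharper semistable bound of \cite{La2} over \cite[Theorem 3.3]{La1}, so the ``main obstacle'' has already been overcome in \cite{La2}.
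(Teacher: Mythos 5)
Your proposal is correct and is essentially the paper's own argument: the paper proves Theorem \ref{Bogomolov} precisely by running the proof of \cite[Theorem 5.1]{La1} (Harder--Narasimhan reduction, the discriminant identity, Hodge index, and the slope bookkeeping you spell out) and replacing the final appeal to \cite[Theorem 3.3]{La1} by \cite[Theorem 2.17]{La2}. The extra details you supply (the convexity bound giving the middle term and the inequality $\sum_i r_i(r_i-1)^2\le r(r-1)^2$ giving the $L_X^2$ term) are accurate and consistent with that proof.
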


\medskip
We will also need a corollary to the following theorem of
Ilangovan, Mehta and Pa\-ra\-mes\-wa\-ran:

\begin{Theorem} (see \cite[Theorem 3.1]{IMP})
Let $G$ be a connected reductive algebraic group over  $k$ and let
$G\to SL(V)$ be a low height representation (see \cite[2.1]{IMP}
for the definition of low height representations). Then  for every
semistable principal $G$-bundle $E$ on $X$, the induced vector
bundle $E(V)$ is also semistable with respect to the same
polarization.
\end{Theorem}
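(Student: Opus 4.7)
My plan is to argue by contradiction, using Ramanathan's characterization of semistable principal $G$-bundles: $E$ is semistable if and only if for every reduction of $E$ to a parabolic $P\subset G$ and every dominant character $\chi$ of $P$ trivial on the centre of $G$, the associated line bundle $E_P(\chi)$ on $X$ has non-positive degree. This characterization is what ties semistability of $G$-bundles to parabolic reductions and will provide the contradiction at the end.

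Assume $E$ is semistable but $E(V)$ is not, and pick a saturated destabilizing subsheaf $W\subset E(V)$. On the open set $U\subset X$ over which $W$ is a subbundle, the inclusion $W\hookrightarrow E(V)$ corresponds to a section of the Grassmann bundle $E\times^G \Gr_{\rk W}(V)\to X$, equivalently to a reduction of structure group of the $\SL(V)$-bundle $E(V)$ to a maximal parabolic $Q\subset \SL(V)$. The next step is to pull this reduction back along the given embedding $G\hookrightarrow \SL(V)$: I would use the low height hypothesis to construct from $W$ a reduction of $E|_U$ itself to a parabolic $P\subset G$, together with a $P$-submodule $V'\subset V$ of dimension $\rk W$ such that $W=E_P(V')|_U$.

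Having such a reduction, one extends it across the codimension $\geq 2$ complement $X\setminus U$ using properness of $E/P\to X$ and normality of $X$, yielding a reduction of $E$ to $P$ on all of $X$. Then $\det W=E_P(\det V')$ is a line bundle associated to a character $\chi=\det V'$ of $P$; up to subtracting a multiple of $\det V$ to kill the central component, $\chi$ is a dominant character of $P$ trivial on the centre of $G$. By Ramanathan's criterion applied to the semistable bundle $E$, one obtains $\deg \det W\leq 0$, hence $\deg W\leq 0$, contradicting the destabilizing property of $W$.

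The main obstacle is the second step above: building the reduction of $E$ to a parabolic $P\subset G$ out of the destabilizing subsheaf $W\subset E(V)$. In characteristic zero this uses complete reducibility of $G$-modules under $P$ and is essentially the Ramanan--Ramanathan argument. In positive characteristic complete reducibility of $P$-modules fails in general, and the low height hypothesis is precisely the quantitative condition that restores enough control (for instance, existence of a Jordan--H\"older filtration by $P$-modules with tame highest weights) to push the argument through. Verifying this carefully in full generality is the content of \cite{IMP}.
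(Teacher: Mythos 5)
This theorem is not proved in the paper at all: it is quoted verbatim from \cite{IMP} and used only through Corollary \ref{IMP}, so there is no internal proof to compare with. Judged as a proof on its own, your proposal has a genuine gap, and you essentially acknowledge it yourself: the step ``use the low height hypothesis to construct from $W$ a reduction of $E|_U$ to a parabolic $P\subset G$ together with a $P$-submodule $V'\subset V$ with $W=E_P(V')|_U$'' is not a lemma one can wave at --- it is the entire mathematical content of the theorem in positive characteristic, and your sketch ends by deferring it to \cite{IMP}, i.e.\ to the result being proved. Everything surrounding that step (Ramanathan's criterion, extension of the reduction over a codimension two locus, the degree computation with $\det V'$, triviality of $\det V$ since the target is $\SL(V)$) is routine; what is missing is the bridge from instability of $E(V)$ to a destabilizing parabolic reduction of $E$ itself.

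Moreover, the bridge as you formulate it is not quite the one that works. A maximal destabilizing subsheaf $W\subset E(V)$ need not be induced by a $P$-submodule of $V$ for any parabolic $P\subset G$; the stabilizer in $G$ of the corresponding subspace of $V\otimes k(X)$ at the generic point is in general not parabolic. The actual argument of Ramanan--Ramanathan \cite{RR} and \cite{IMP} runs through Kempf--Rousseau instability theory: the Harder--Narasimhan filtration of $E(V)$ at the generic point defines an unstable point of a flag variety with $G$-action over $K=k(X)$, and one takes the canonical (Kempf) instability parabolic of $G$ attached to that point; its uniqueness gives the reduction of $E$ and a character of positive degree, contradicting Ramanathan semistability of $E$. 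The role of the low height hypothesis is precisely to make this work over the imperfect field $K$: low height representations are semisimple (Serre) and the relevant stabilizers are saturated/smooth enough that the instability parabolic is defined over $K$ itself rather than only over a purely inseparable extension. Your phrase about Jordan--H\"older filtrations with tame highest weights gestures at this, but without the Kempf-parabolic mechanism and the rationality/separability argument the proof does not close, so the proposal should be regarded as an outline of the known strategy rather than a proof.
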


\begin{Corollary} \label{IMP}
If $E_1$ and $E_2$ are semistable sheaves and $p>\rk E_1+\rk E_2
-2$ then $E_1\otimes E_2$ is semistable.
\end{Corollary}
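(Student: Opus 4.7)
My plan is to deduce this from the theorem of Ilangovan--Mehta--Parameswaran quoted just above, applied to the reductive group $G = \SL(V_1) \times \SL(V_2)$, where $V_i$ is an $r_i$-dimensional vector space with $r_i = \rk E_i$, acting on the external tensor product representation $V_1 \otimes V_2$. The first step is to check that this is a low-height representation in the sense of \cite[2.1]{IMP}: it is irreducible with highest weight $(\omega_1,\omega_1)$, and its height is the sum of the heights of the two standard representations, namely $(r_1 - 1) + (r_2 - 1) = r_1 + r_2 - 2$, which by hypothesis is strictly less than $p$.

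To apply the theorem one must pass from vector bundles to principal $\SL$-bundles. Since semistability, the tensor product, and the rank hypothesis are all unchanged when each $E_i$ is twisted by a line bundle, I may freely replace $E_i$ by $E_i \otimes M_i$. A standard construction (a cyclic cover of degree prime to $p$, ramified along a smooth divisor in $|NH|$ for suitable $N$) produces a finite separable cover $\pi : Y \to X$ on which $\pi^*\det E_i$ admits an $r_i$-th root $L_i$ for $i = 1,2$; the bound on $p$ forces $\gcd(r_i,p) = 1$ in the only cases where the statement has content. Then $F_i := \pi^* E_i \otimes L_i^{-1}$ is a semistable rank-$r_i$ sheaf on $Y$ with trivial determinant, and its frame bundle $P_i$ is a semistable principal $\SL(V_i)$-bundle; consequently $P_1 \times_Y P_2$ is a semistable principal $G$-bundle.

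The IMP theorem then yields that the associated bundle $(P_1 \times_Y P_2)(V_1 \otimes V_2) \cong F_1 \otimes F_2$ is semistable on $Y$. Since $F_1 \otimes F_2$ differs from $\pi^*(E_1 \otimes E_2)$ only by a twist by the line bundle $L_1^{-1} \otimes L_2^{-1}$, the sheaf $\pi^*(E_1 \otimes E_2)$ is also semistable on $Y$, and since $\pi$ is finite and separable this forces $E_1 \otimes E_2$ to be semistable on $X$.

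The main obstacle I anticipate is the descent step: one must produce a finite separable cover on which the two determinants become $r_i$-divisible and then verify that slope semistability is reflected by such a cover (the forward direction is standard). The height calculation itself is essentially routine once the IMP convention for height of a representation is pinned down.
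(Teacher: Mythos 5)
Your proposal is correct and is essentially the deduction the paper intends: the corollary is stated without proof as an immediate consequence of the Ilangovan--Mehta--Parameswaran theorem, applied exactly as you do to $\SL(V_1)\times\SL(V_2)$ acting on $V_1\otimes V_2$, whose height $(r_1-1)+(r_2-1)=\rk E_1+\rk E_2-2<p$ makes it a low height representation, with the standard Bloch--Gieseker type cover (separable since $p\nmid r_i$ in all nontrivial cases) reducing to trivial determinants. Two minor points to tidy up: since the $E_i$ are only torsion free, work on the big open subset where both are locally free (slope semistability is insensitive to codimension $\ge 2$), and note that the delicate direction of the cover argument is the ascent $E_i$ semistable $\Rightarrow\pi^*E_i$ semistable (via the Galois closure and uniqueness of the Harder--Narasimhan filtration), while the descent of semistability of the tensor product back to $X$ is the easy direction.
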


Finally, let us recall the following result which in
characteristic zero was proven by H. Flenner (see \cite[Corollary
2.2]{Fl}) and in arbitrary characteristic it was (almost) proven
by H. Brenner (see \cite[Corollary 7.1]{Br}):

\begin{Theorem} \label{Brenner}
The kernel $K_a$ of the evaluation map $H^0(\O_{\PP ^n}(a))\otimes
\O_{\PP^n}\to \O_{\PP ^n}(a)$ is semi\-stable.
\end{Theorem}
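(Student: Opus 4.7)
The plan is to prove the semistability of $K_a$ by induction on $n$. Writing $V=H^0(\O_{\PP^n}(a))$ and $N=\dim V=\binom{a+n}{n}$, one reads off from the defining sequence that $\rk K_a=N-1$ and $c_1(K_a)=-a$, whence $\mu(K_a)=-aH^n/(N-1)$. The base case $n=1$ is immediate, since the defining sequence splits and $K_a\cong \O_{\PP^1}(-1)^{\oplus a}$ is manifestly semistable. The subsidiary base case $a=1$ is equally simple: the Euler sequence identifies $K_1$ with $\Omega_{\PP^n}(1)$, a classically stable bundle.

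For the inductive step ($n\ge 2$, $a\ge 2$) I would restrict to a general hyperplane $H\cong \PP^{n-1}$. Since $V\to H^0(\O_H(a))$ is surjective with kernel $\sigma\cdot H^0(\O_{\PP^n}(a-1))$, where $\sigma$ cuts out $H$, one obtains the exact sequence
\begin{equation*}
0 \to H^0(\O_{\PP^n}(a-1))\otimes \O_H \to K_a|_H \to K_a^H \to 0,
\end{equation*}
in which $K_a^H$ is semistable by the inductive hypothesis. Suppose for contradiction that there exists a saturated $F\subset K_a$ with $\mu(F)>\mu(K_a)$. For general $H$, let $\bar F$ be the image of $F|_H$ in $K_a^H$ and $K$ the kernel. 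Semistability of $K_a^H$ gives $\mu(\bar F)\le \mu(K_a^H)$, and the inclusion $K\hookrightarrow H^0(\O_{\PP^n}(a-1))\otimes\O_H$ into a trivial sheaf gives $\mu(K)\le 0$. Comparing the explicit formulas shows $\mu(K_a^H)<\mu(K_a)$, so when $K=0$ one concludes immediately that $\mu(F)=\mu(\bar F)\le \mu(K_a^H)<\mu(K_a)$, a contradiction.

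The main obstacle is the mixed case, where $K\ne 0$ contributes slope-zero pieces that could raise $\mu(F|_H)$ above $\mu(K_a)$. To handle it, I would exploit two additional ingredients. First, a saturated subsheaf of $V\otimes \O_{\PP^n}$ of slope zero must come from a constant linear subspace $W\subset V$, in which case $F=(W\otimes\O_{\PP^n})\cap K_a$ is itself a sub-syzygy bundle of slope $-a/(\dim W-1)\le \mu(K_a)$. Second, the Bogomolov-type inequality of Theorem \ref{Bogomolov} can be applied to convert the existence of such an $F$ into an inequality on discriminants that the combinatorially rigid $K_a$ fails to satisfy. Because neither ingredient involves Frobenius, the argument transfers verbatim from characteristic zero to arbitrary positive characteristic.
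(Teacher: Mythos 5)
Your reduction handles only the easy half of the induction, and the two ingredients you invoke for the ``mixed case'' do not close the gap that is the actual heart of the theorem. Concretely: when the kernel $K=F|_H\cap\bigl(H^0(\O_{\PP^n}(a-1))\otimes\O_H\bigr)$ is nonzero, all you get from $\mu(K)\le 0$ and the inductive hypothesis is $\deg F\le \rk(\bar F)\,\mu(K_a^H)$, and this is weaker than the needed bound $\deg F\le \rk(F)\,\mu(K_a)$ as soon as $\rk(\bar F)/\rk(F)$ is small — precisely the situation you must rule out. Your first ingredient does not apply: nothing in your construction produces a \emph{slope-exactly-zero} saturated subsheaf of $V\otimes\O_{\PP^n}$ on $\PP^n$; the sheaf $K$ lives on $H$, its slope is only $\le 0$, and promoting it to a subspace $W\subset V$ independent of the (varying) hyperplane would require a Grauert--M\"ulich/Flenner-type standard construction, which in characteristic $p$ carries its own inseparability problems and is nowhere supplied. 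Moreover, even granting a constant $W$, your claim that $(W\otimes\O_{\PP^n})\cap K_a$ has slope $-a/(\dim W-1)\le\mu(K_a)$ is only correct when $W$ has no divisorial base locus; in general the slope is $-(a-b)/(\dim W-1)$ with $b\ge 0$ the degree of the base divisor, and proving that no sub-linear system (or, worse, no arbitrary saturated subsheaf) destabilizes is exactly the nontrivial content of the theorem — it is the combinatorial inequality that Brenner verifies for equivariant (monomial) subsheaves after reducing to them via Klyachko's description of toric bundles. Calling $K_a$ ``combinatorially rigid'' asserts the conclusion rather than proving it.

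Your second ingredient cannot substitute for this: Theorem \ref{Bogomolov} is an inequality valid for \emph{every} torsion-free sheaf, with the middle term $(\mu_{\max}-\mu)(\mu-\mu_{\min})$ always non-negative, so it constrains instability only when one exhibits a sheaf with $\Delta\cdot H^{n-2}<0$; in the paper this is achieved by elementary transformations of a semistable sheaf along thickened divisors, a construction that has no counterpart in your argument, and $\Delta(K_a)H^{n-2}\ge 0$, so applying the inequality to $K_a$ (or to $F$, $K_a/F$) yields nothing. For comparison, the paper does not prove Theorem \ref{Brenner} directly at all: it quotes Brenner's result, observing (via Perling) that the Klyachko-type toric arguments Brenner uses are characteristic-free, and notes Trivedi's weaker explicit bound as an alternative. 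So your hyperplane-induction route is genuinely different from the paper's citation of the equivariant/toric argument, but as written it proves the statement only in the unmixed case and leaves the essential case open. (Minor points: the Euler-type sequence on $\PP^1$ does not split, though $K_a\cong\O_{\PP^1}(-1)^{\oplus a}$ is still correct, e.g.\ by Grothendieck splitting and $H^0(K_a)=0$.)
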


In fact, Brenner in his paper uses the characteristic zero
assumption when using Klyachko's results (see \cite[remark on
p.~416, before Lemma 6.1]{Br}). However, it was remarked by  M.
Perling in \cite{Pe} that Klyachko's results are valid in an
arbitrary characteristic. In fact, Perling's description is better
suited to generalizing \cite[Lemma 6.1]{Br} as it uses toric
sheaves rather than just toric vector bundles. A weaker form of
the above theorem, but still sufficient for our applications, is
the main result of Trivedi's paper \cite{Tr}. In particular,
Trivedi proves that
$$\mu _{\max}(K_a^*)\le \frac {a}
{{{ \lceil{\frac{a}{2}}\rceil +n-1}\choose {n-1}}} .$$

\section{New restriction theorem for semistability}

\begin{Theorem} \label{rest-ss}
Let $k$ be an algebraically closed field of characteristic $p>0$.
Let $X$ be a smooth $n$-dimensional ($n\ge 2$) $k$-variety with a
very ample divisor $H$. Let $E$ be an $H$-semistable torsion free
sheaf of rank $r\ge 2$ on $X$. Let us assume that $p\ge r-1$ and
let us take an integer $a$ such that
$${{a+n \choose n} } > \frac{1}{2}\left({\max\{\frac{r^2-1}{4},1 \}}H^n +1\right)
\left(  \Delta (E)H^{n-2}+\frac{r^2(r-1)^2{L_X}^2}{4H^n} \right)
+1.$$ Then for a general divisor $D\in |aH|$ the restriction $E_D$
is $H_D$-semistable.
\end{Theorem}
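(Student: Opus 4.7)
The plan is to argue by contradiction: assume that $E_{D}$ is not $H_{D}$-semistable for a general $D\in|aH|$ and derive a contradiction with the hypothesis on $\binom{a+n}{n}$. The strategy combines three restriction techniques: a Grauert--M\"ulich-style spreading-out of destabilizing subsheaves over the linear system $|aH|$, a Flenner-type upper bound on the slope gap via the semistability of the evaluation kernel $K_{a}$ (Theorem \ref{Brenner}), and a Bogomolov-type lower bound coming from an elementary transformation along $D$ combined with Theorem \ref{Bogomolov}. The positive-characteristic novelty is that the descent step in the middle must be performed non-separably, via a Frobenius twist, and the resulting loss of slope is precisely what the term $L_{X}$ in the hypothesis is designed to absorb.

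For the spreading-out, I would consider the universal divisor $\mathcal{D}\subset X\times|aH|$ with projections $q\colon\mathcal{D}\to X$ (a projective bundle over $X$, since the $D\in|aH|$ containing a given $x\in X$ form a hyperplane in $|aH|$) and $\pi\colon\mathcal{D}\to|aH|$. Relative Harder--Narasimhan applied to $q^{*}E$ with respect to $\pi$, after restricting to a dense open $U\subset|aH|$, yields a saturated subsheaf $\mathcal{F}\subset q^{*}E|_{\pi^{-1}(U)}$ of rank $s$ with $1\le s\le r-1$ whose restriction to a general $D\in U$ is the maximal destabilizing subsheaf $F_{D}\subset E_{D}$. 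Set $\xi:=\mu_{H_{D}}(F_{D})-\mu_{H_{D}}(E_{D})>0$. The upper bound then comes by descending $\mathcal{F}$ to $X$ and using the $H$-semistability of $E$ together with the (effective) semistability of $K_{a}$ due to Trivedi: this gives, schematically, an upper estimate of the form $\xi\le s(r-s)aH^{n}\big/\bigl(r\binom{a+n}{n}\bigr)$ up to a correction controlled by $L_{X}$. In positive characteristic the descent may fail to be separable, so one instead descends after replacing $E$ by a Frobenius pullback $(F^{m})^{*}E$ for a minimal $m\ge 0$; the resulting loss of slope is estimated against $L_{X}$ via the very definition of $L_{X}$. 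The hypothesis $p\ge r-1$, together with Corollary \ref{IMP}, ensures that the exterior and tensor powers of $E$ that appear in the descent remain semistable.

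For the matching lower bound I would fix a general $D$ and form the elementary transformation $0\to E'\to E\to E_{D}/F_{D}\to 0$, so that $E'$ is a torsion-free sheaf of the same rank as $E$. A Chern-class computation using Grothendieck--Riemann--Roch for the closed immersion $D\hookrightarrow X$ yields, schematically,
$$\Delta(E')H^{n-2}=\Delta(E)H^{n-2}-\frac{r}{s(r-s)}\cdot\frac{\xi^{2}}{aH^{n}}+(\text{lower-order terms}),$$
and applying Theorem \ref{Bogomolov} to $E'$, with $(\mu_{\max}-\mu)(\mu-\mu_{\min})$ estimated in terms of $\xi$ and $L_{X}$, produces an inequality of the form
$$\Delta(E)H^{n-2}+\frac{r^{2}(r-1)^{2}L_{X}^{2}}{4H^{n}}\;\ge\;\frac{2\,\xi^{2}}{s(r-s)\max\{(r^{2}-1)/4,\,1\}\,aH^{n}}.$$
Inserting the upper bound on $\xi$ from the previous paragraph and solving for $\binom{a+n}{n}$ then gives exactly the inequality contradicting the hypothesis on $a$.

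The main technical obstacle, and the main innovation over \cite{La1}, is the non-separable descent: one must identify precisely when the relative Harder--Narasimhan subsheaf $\mathcal{F}$ on $\mathcal{D}$ descends to a subsheaf of $E$ itself (rather than only to a subsheaf of some Frobenius pullback) and quantify the slope-loss in the non-separable case in terms of $L_{X}$. A closely related subtlety is that after such a Frobenius twist the divisor along which the elementary transform in the third step is performed becomes non-reduced, so the classical Bogomolov inequality does not apply and one genuinely needs the strengthened form of Theorem \ref{Bogomolov}, with its $L_{X}^{2}$ correction; this is exactly what makes the bound clean and explains why the $L_{X}$-term enters the final hypothesis in the shape stated.
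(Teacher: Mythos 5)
You have the right cast of characters (incidence variety, relative Harder--Narasimhan filtration, a Flenner-type bound via semistability of $K_a$, an elementary transformation fed into Theorem \ref{Bogomolov}), but two of the load-bearing mechanisms are misidentified. First, the non-separable descent is not ``descend $\mathcal F$ to $X$ after replacing $E$ by $(F^m)^*E$''. In the paper's Step 1 nothing is ever descended to $X$ and $E$ is never Frobenius-pulled back: one takes the largest $s$ such that the relative HN filtration of $q^*E$ on $Z$ descends under the \emph{relative} Frobenius $F^s_{Z/X}\colon Z\to Z^{(s)}_X$ (Cartier's theorem), and the failure to descend one step further produces, via the canonical connection, a nonzero map $E_{i_0}\to (F^s_{Z/X})^*\Omega_{Z'/X}\otimes (q^*E)/E_{i_0}$. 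The price of inseparability is therefore the factor $p^s$ in the Flenner-type estimate $\mu_{i_0}-\mu_{i_0+1}\le a^2p^sH^n/\bigl({a+n\choose n}-ap^s-1\bigr)$; it is \emph{not} absorbed by $L_X$. The term $L_X$ enters only once, through Theorem \ref{Bogomolov} applied on $X$ to the elementary transform $G$, and has nothing to do with the divisor being non-reduced. Your variant of replacing $E$ by $(F^m)^*E$ is not available here: $(F^m)^*E$ need not be semistable (only strong semistability would give that, which is Theorem \ref{rest-sss}'s setting), and $\Delta((F^m)^*E)=p^{2m}\Delta(E)$ would wreck any bound of the stated shape.

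Second, and more seriously, your two schematic inequalities do not combine into a contradiction. You bound $\xi$ from above by $s(r-s)aH^n/\bigl(r{a+n\choose n}\bigr)$ and you bound $\Delta(E)H^{n-2}+r^2(r-1)^2L_X^2/(4H^n)$ from below by a quantity proportional to $\xi^2/(aH^n)$; both statements are compatible for arbitrarily small $\xi>0$ no matter how large ${a+n\choose n}$ is, so ``inserting the upper bound on $\xi$'' goes the wrong way. What is missing is the paper's Step 2: since $\Pic Z'$ is generated over the pullback of $\Pic X$ by $\O(1)$, which is trivial on the fibres of $p$, the degrees of the HN factors of $E_D$ for general $D$ are divisible by $a$, whence the \emph{lower} bound $\mu_i-\mu_{i+1}\ge a/\max\{(r^2-1)/4,1\}$; this squeezed against the Flenner bound yields a lower bound on $ap^s$ (Step 4). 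The matching upper bound $2ap^s\le \Delta(E)H^{n-2}+r^2(r-1)^2L_X^2/(4H^n)$ comes from the elementary transformation performed along the \emph{non-reduced} divisor $p^sD$ (possible only because the filtration was descended to $Z'$, and requiring the Chern-class lemma for sheaves on $dD$ locally free in codimension $1$), where divisibility by $a$ is used again to make the destabilizing term $\le -a$. Without the divisibility input your approach at best recovers a bound weaker by a factor of $a$, and as written it yields no contradiction at all.
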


\begin{proof}
Let us assume that the restriction of $E$ to a general divisor in
$|aH|$ is not $H$-semistable.

Let $\Pi$ denote the complete linear system $|aH|$. Let $Z=\{
(D,x)\in \Pi \times X: x\in D\}$ be the incidence variety with
projections $p: Z\to \Pi$ and $q: Z\to X$. Let $Z_t$ denote the
scheme theoretic fibre of $p$ over $t\in \Pi$.  Let $0\subset
E_0\subset E_1\subset \dots \subset E_l=q^*E$ be the relative
Harder--Narasimhan filtration of $q^*E$ with respect to $p$ (see
\cite[Theorem 2.3.2 and Remark 2.3.3]{HL}). By definition there
exists a nonempty open subset $U$ of $\Pi$ such that all quotients
$E^i=E_i/E_{i-1}$ are flat over $U$ and such that for every $t\in
U$ the fibres $(E_{\bullet})_t$ form the Harder--Narasimhan
filtration of $E_t=(q^*E)_{Z_t}$ (we can also assume that all
fibers $Z_t$  for $t\in U$ are smooth).

Now the proof splits into several steps.

\medskip

\emph{Step 1.} In this step we find an upper bound on a certain
difference of slopes of consecutive quotients in the
Harder--Narasimhan filtration of restriction of $E$ to a general
divisor in $|aH|$ (see inequality (\ref{ineq})). This is done
using an inseparable descent.

Assume that the filtration $E_{\bullet}$ descends under the $s$-th
relative Frobenius morphism $F^{s}_{Z/X}: Z\to Z^{s}_X$ but it
does not descend under the $(s+1)$-th relative Frobenius morphism
$F^{s+1}_{Z/X}: Z\to Z^{(s+1)}_X$.   To simplify notation let us
denote $Z^{(s)}_X$ by $Z'$ and $q^{(s)}_X$ by $q'$. Let
$E_{\bullet}'$ be a filtration of $(q')^* E$ such that
$E_{\bullet}\simeq (F_{Z/X}^{(s)})^*E_{\bullet}'$. Since
$$(q')^* E=(F_{Z'/X})^* ((q_X^{(s+1)})^*E)$$
there exists a canonical relative connection
$$\nabla _{\can} : (q')^*E\to \Omega _{Z'/X} \otimes (q')^* E .$$
By our assumption and Cartier's theorem (see, e.g., \cite[Theorem
5.1]{Ka}) there exists some index $i_0$ such that $E_{i_0}'$ is
not preserved by $\nabla_{\can}$. Then the connection induces a
non-zero $\O_{Z'}$-homomorphism
$$E_{i_0}'\to \Omega _{Z'/X}\otimes  ((q')^*E)/E_{i_0}'.$$
After pulling-back to $Z$ we get a non-zero $\O_Z$-homomorphism
$$E_{i_0}\to (F^{s}_{Z/X})^*\Omega _{Z'/X}\otimes  (q^* E)/E_{i_0}.$$

\bigskip
After restricting to a general fibre $Z_t$ of $p$ we see that
$$\mu_{\min}((E_{i_0}\otimes (q^* E/E_{i_0})^*)_{Z_t})\le
\mu_{\max}(((F^{s}_{Z/X})^*\Omega _{Z'/X})_{Z_t}).$$

By assumption, quotients $E^i_{Z_t}$ are semistable for a general
point $t\in \Pi$. As the sum of ranks of $E^i$ and $E^j$ is less
than or equal to $r\le p+1$, the Ilangovan--Mehta--Parameswaran
theorem (see Corollary \ref{IMP}) implies semistability of sheaves
$E^i_{Z_t}\otimes (E^j_{Z_t})^*$  for all $i,j$. Let us set
$r_i=\rk (E^i_{Z_t})$ and $\mu_i=\mu (E^i_{Z_t})$. Then by the
above we have
\begin{equation}\label{ineq}
\mu_{i_0}-\mu_{{i_0}+1}\le
\mu_{\max}(((F^{s}_{Z/X})^*\Omega_{Z'/X})_{Z_t}).
\end{equation}

\medskip
\emph{Step 2.} In this step we introduce some notation and we
recall a lower bound on differences $\mu_{i}-\mu _{i+1}$ (see
inequality (\ref{mu-diff})).

Let $\K$ be the kernel of the evaluation map $H^0(X, \O
_X(aH))\otimes \O_X\to \O _X (aH)$. Then $Z=\PP _X(\K ^{\vee})$
and $q$ corresponds to the natural projection $\PP _X(\K
^{\vee})\to X$. It is easy to see that $Z'=\PP _X((F^s)^*\K
^{\vee})$ and
$(F^s_{Z/X})^*\O_{\PP_X((F^s)^*\K^{\vee})}(1)=\O_{\PP
_X(\K^{\vee})}(p^s)$. Let us note that the Picard group of $\PP _X
((F^s)^*\K ^{\vee})$ is generated over $(q^{(s)})^*\Pic X$ by
$\O_{\PP((F^s)^*\K ^{\vee})}(1)$. Therefore we can write $\det
(E^i)'\simeq \O_{Z'}(-b_i')\otimes (q^{(s)})^*\cL_i$ for some
integers $b_i'$ and line bundles $\cL_i$ on $X$. Then $\det
E^i\simeq \O_{Z}(-p^sb_i')\otimes q^*\cL_i$. We know that
$\mu_i>\mu_{i+1}$ and $\mu_i=\frac{\deg E^i_{Z_t}}{r_i}$. By the
above $\deg E^i_{Z_t}=\deg (\cL_i|_{Z_s})=a L_i H^{n-1},$ where
$L_i=c_1\cL_i$. Hence
\begin{equation}\label{mu-diff}
\mu_i-\mu_{i+1}= \frac{a L_i H^{n-1}}{r_i}-\frac{ aL_{i+1}
H^{n-1}}{r_{i+1}}\ge \frac{a}{\max\{\frac{r^2-1}{4},1 \}}.
\end{equation}

\medskip

\emph{Step 3.} In this step we find a precise bound on
$\mu_{\max}(((F^{s}_{Z/X})^*\Omega_{Z'/X})_{Z_t})$. This step can
be summed up in the following lemma, whose proof is similar to
proof of Flenner's restriction theorem (see \cite[Proposition
1.10]{Fl}; we will use \cite[Theorem 7.1.1]{HL} as reference as it
will be easier to follow the proof):

\begin{Lemma}
If $ap^s <{a+n \choose n}-1$ then
$$\mu_{\max}(((F^{s}_{Z/X})^*\Omega _{Z'/X})_{Z_t})\le
\frac{a^2p^sH^n}{{a+n \choose n} -ap^s-1}.$$
\end{Lemma}

\begin{proof}
First we need to reduce to the case $X=\PP ^n$. This can be done
in the same way as Steps 2 and 3 in proof of \cite[Theorem
7.1.1]{HL} and therefore we skip it. Let us set $N={a+n \choose
n}$ and let $K$ be the kernel of the evaluation map $H^0(\O_{\PP
^n}(a))\otimes \O_{\PP^n}\to \O_{\PP ^n}(a)$. As in Step 4 of
proof of \cite[Theorem 7.1.1]{HL}, using the Veronese embedding
$\PP^n \hookrightarrow \PP^{N-1}$ one can see that we only need to
show that
$$\mu _{\max}(((F^s)^*K^*)_D)\le \frac{a^2p^s}{N -ap^s-1},$$
where $D$ is a general degree $a$ hypersurface in $\PP^n$. Now let
us recall that $K$ is semistable (see Theorem \ref{Brenner}).
Since every semistable bundle on $\PP ^n$ is strongly semistable
(as $\mu _{\max} (\Omega_ {\PP ^n})<0$) we see that $(F^s)^*K^*$
is also semistable. Moreover, by the Ramanan--Ramanathan theorem
(see \cite[Theorem 3.23]{RR}) the bundles $\bigwedge ^q
((F^s)^*K^*)$ are also semistable for all $q$. Hence
$$H^0(\PP^n ,{\bigwedge} ^q ((F^s)^*K^*) \otimes \O_{\PP ^n}(b))=0$$
if $0>\mu (\bigwedge ^q ((F^s)^*K^*) \otimes \O_{\PP
^n}(b))=b-qp^s\mu (K)$, which is equivalent to
$b<-\frac{qap^s}{N-1}$. Now we can come back to Step 4 of proof of
\cite[Theorem 7.1.1]{HL} and finish in the same way.
\end{proof}

\medskip
\emph{Step 4.} Gathering information obtained in previous steps we
find in this step a lower bound on $ap^s$ (see inequality
(\ref{Flenner})), where $s$ is the same as in Step 1.

Lemma from Step 3, together with inequalities (\ref{ineq}) and
(\ref{mu-diff}) imply that if $ap^s <{a+n \choose n}-1$ then
$$\frac{a}{\max\{\frac{r^2-1}{4},1 \}}\le \frac{a^2p^sH^n}{{a+n \choose n} -ap^s-1}.$$
This inequality is equivalent to
\begin{equation} \label{Flenner}
ap^s\ge \frac{1}{H^n\max\{\frac{r^2-1}{4},1 \} +1} \left( {a+n
\choose n}-1\right).
\end{equation}
Obviously, the above inequality is also satisfied  if $ap^s \ge
{a+n \choose n}-1$.

\medskip

\emph{Step 5.} This step containing  the rest of the proof is
quite lengthy. It is devoted to finding an upper bound on $ap^s$.
First we prove that the restrictions of $E$ to some non-reduced
divisors are unstable. Then the strategy is quite similar to the
author's proof of Bogomolov type restriction theorem \cite[Theorem
5.2]{La1}. Namely, we can perform elementary transformations with
respect to these non-reduced divisors and use Theorem
\ref{Bogomolov} to find an upper bound on degree of divisors for
which the restriction is unstable. One of the new features is that
to get interesting inequalities, instability of restrictions to
non-reduced divisors needs to be understood for locally free
subsheaves rather than for all pure subsheaves.

Let us consider the commutative diagram
$$
\xymatrix{ Z \ar@(ur,ul)[rr]^{F^s_{Z/k}}\ar[r]_{F^s_{Z/X}}
\ar[d]_{p}&Z'\ar[r] &
Z^{(s)} \ar[d]^{p^{(s)}} \\
\Pi \ar[rr]_{F^{s}_{\Pi/k}} && \Pi^{(s)} }
$$
Let $p'$ be the composition of $Z'\to Z^{(s)}$ and $Z^{(s)}\to
\Pi^{(s)}$. The fibers of this map are non-reduced divisors of the
form $pD$ for $D\in \Pi$.

Let us recall that the sheaves $E^i$ are locally free on an open
subset whose intersection with each fiber $Z_t$, $t\in U$,
contains all points in codimension $1$ in $Z_t$ (this follows from
the fact that $E^i_{Z_t}$ are torsion free and $E^i$ is locally
free at the points where $E^i_{Z_t}$ is locally free; see, e.g.,
\cite[Lemma 2.1.7]{HL}).

Let us note that if $X\to Y$  is a faithfully flat morphism of
schemes and for some coherent $\O_Y$-module $G$ its pull-back
$f^*G$ is locally free then $G$ is locally free. This can be
easily seen locally using, e.g., the fact that projective modules
over local rings are free. This also implies that if $f^*G$ is
torsion free then $G$ is also torsion free.

Let us set $U'=F^{s}_{\Pi/k}(U)$. Since $(F^s_{Z/X})^*(E')^i=E^i$
this fact implies that $(E')^i$ are locally free in codimension
$1$ on fibers $Z'_t$ of $p'$ for $t\in U'$. Using \cite[Lemma
2.1.4]{HL} one can in fact see that all factors $(E')^i$ are flat
over $U'$. In particular, for $t\in U'$ the fibres
$(E'_{\bullet})_t$ form a filtration of $E'_t=((q')^*E)_{Z'_t}$
with quotients that are pure $\O_{Z'_t}$-modules. These quotients
are also locally free in codimension $1$ (note that this does not
follow from the purity as the fibers $Z_t$ are non-reduced).

Let us take $t\in U$ and let $t'$ be its image in $U'$. Let $D$
denote the image of $Z_t$ under $p$. Then $Z'_{t'}$ is isomorphic
to the non-reduced divisor $p^sD$. Let $S$ and $T$ denote the
sheaves on $X$ corresponding to $(E'_{l-1})_{Z'_{t'}}$ and
$((E')^{l})_{Z'_{t'}}$. Let $G$ be the kernel of the composition
$E\to E_{p^sD}\to T$. Then we have two short exact sequences:
$$0\to G\to E \to T\to 0$$
and
$$0\to E(-p^sD)\to G \to S\to 0.$$
We can use the first sequence to compute $\Delta (G)$:
$$
\Delta (G) =\Delta (E)-\rho (r-\rho) p^{2s} D^2+2 (r
c_1(T)-(r-\rho)p^sDc_1(E)),$$ where $\rho$ is the rank of $S$ on
$nD$ (which is the same as the rank of $S_D$ on $D$). Now we need
the following lemma:

\begin{Lemma}
Let $D\subset X$ be a smooth divisor on a smooth projective
variety $X$ of dimension at least $2$ and let $d\ge 1$ be an
integer. Let $F$ be a coherent $\O_{dD}$-module that is locally
free at each codimension $1$ point of $D$. Then $c_1(F)=n\cdot
c_1(F_D)$ (as Chern classes of sheaves on $X$).
\end{Lemma}

\begin{proof}
Taking hyperplane sections we can easily reduce the assertion to
the surface case. Let us note that  for every $i\le d$ we have the
following short exact sequences of $\O_{dD}$-modules
$$0\to \O_{(i-1)D}(-D)\to \O_{iD}\to \O_D\to 0.$$
Since $F$ is a locally free $\O_{dD}$-module, the corresponding
sequences remain exact after tensoring with $F$. Therefore we have
a filtration $0=F_0\subset F_1\subset \dots \subset F_{d-1}\subset
F_d =F$ of $F$ with sheaves $F_j=F\otimes \O_{jD}(-(d-j)D)$ and
quotients $F_D(-(p-j)D)$. This easily implies the required
equality of Chern classes.
\end{proof}

\medskip
By the above lemma we have
$$
\Delta (G) H^{n-2} =\Delta (E)H^{n-2}-\rho (r-\rho) p^{2s}
D^2H^{n-2}+2 p^s(r c_1(T_D)-(r-\rho)Dc_1(E))H^{n-2}.$$ From the
definition of the filtration $E_{\bullet}$ we know that
$$(r c_1(T_D)-(r-\rho)Dc_1(E))H^{n-2}=\rho \deg (E^{l}_{Z_t})
-(r-\rho) \deg((E_{l-1})_{Z_t})<0.$$ But we already proved that
both $\deg (E^{l}_{Z_t})$ and $\deg((E_{l-1})_{Z_t})$ are
divisible by $a$ so we get
$$
\Delta (G) H^{n-2} \le \Delta (E)H^{n-2}-\rho (r-\rho) p^{2s}
D^2H^{n-2}-2 a p^s .$$ Using semistability of $E$ and $E(-p^sD)$ we
get
$$\mu _{\max} (G)-\mu (G)=\mu _{\max} (G)-\mu (E)+{r-\rho \over r}
p^sDH^{n-1}\le {r-\rho \over r}ap^s H^n $$
and
$$\mu (G)-\mu _{\min} (G)=\mu (E(-p^sD))-
\mu _{\min} (G)+{\rho \over r}p^sDH^{n-1}\le {\rho \over r}ap^sH^n
.$$ Hence, applying Theorem \ref{Bogomolov} to $G$, we obtain
\begin{eqnarray*}
-\frac{r^2(r-1)^2}{4} {L_X}^2&\le & H^n\cdot \Delta (G)H^{n-2} +r^2(\mu _{\max} (G) -\mu (G))(\mu (G)-\mu_{\min}(G))\\
&\le & H^n\cdot \Delta (E)H^{n-2}-\rho (r-\rho) a^2p^{2s}(H^n)^2-
2a p^s H^n \\
&&+r^2 \left({r-\rho \over r}ap^sH^n \right) \left({\rho \over r}ap^sH^n\right).\\
\end{eqnarray*}
Therefore we get
$$2a p^s \le  \Delta (E)H^{n-2}+\frac{r^2(r-1)^2{L_X}^2}{4H^n} ,$$
which together with inequality (\ref{Flenner}) gives a contradiction
to our assumptions on $a$.
\end{proof}

\medskip

\begin{Example}
For the projective plane $L_{\PP ^2}=0$. In particular, if for $E$
in Theorem \ref{rest-ss} we take the tangent bundle, then the
theorem implies that the restriction of $T_{\PP ^2}$ to a general
curve of degree $\ge 2$ is semistable. Obviously, this result is
the best possible as the restriction of $T_{\PP ^2}$ to any line
is not semistable.
\end{Example}

\medskip

It is easy to see that the technique of proof of Step 5 of Theorem
\ref{rest-ss} gives the following restriction theorem:

\begin{Theorem} \label{Bog-rest-p}
Let $E$ be an $H$-semistable torsion free sheaf and let $a$ be an
integer such that $K_a$ is semistable and
$$a>{\frac{1}{2}}\Delta(E)H^{n-2}+\frac{r^2(r-1)^2{L_X}^2}{8H^n}.$$
Then for a general divisor $D\in|aH|$ the restriction $E_D$ is
$H_D$-semistable.
\end{Theorem}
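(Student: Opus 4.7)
The plan is to specialise Step 5 of the proof of Theorem \ref{rest-ss} to the case $s=0$, so that no Frobenius descent is needed; the single Bogomolov-type inequality produced by that step then already yields a direct contradiction to the hypothesis on $a$, and Steps 1--4 of Theorem \ref{rest-ss} play no role.

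Concretely, I would argue by contradiction: suppose $E_D$ fails to be $H_D$-semistable for a general $D\in|aH|$. Form the incidence variety $Z=\PP_X(\K^\vee)\subset X\times\Pi$ with projections $p:Z\to\Pi=|aH|$ and $q:Z\to X$, and take the relative Harder--Narasimhan filtration $0\subset E_0\subset\cdots\subset E_l=q^*E$ of $q^*E$ relative to $p$, with quotients $E^i$ flat over a dense open $U\subset\Pi$. Using $\Pic Z=q^*\Pic X\oplus\ZZ\cdot\O_Z(1)$ together with the identification $\O_Z(1)\simeq p^*\O_\Pi(1)$ coming from the universal-divisor description of $Z$ (under which $\O_Z(1)|_{Z_t}$ is trivial for every $t$), one writes $\det E^i\simeq\O_Z(-b_i)\otimes q^*\cL_i$ and hence $\deg E^i_{Z_t}=aL_iH^{n-1}$ with $L_i=c_1(\cL_i)$; in particular every such degree is divisible by $a$.

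Fixing a general $t\in U$ with $D=p(Z_t)$, I would set $T=(E^l)_{Z_t}$ and $S=(E_{l-1})_{Z_t}$, viewed as sheaves on $D\subset X$, and form the elementary transformation $G=\ker(E\twoheadrightarrow T)$ on $X$. The two short exact sequences
\[
0\to G\to E\to T\to 0,\qquad 0\to E(-D)\to G\to S\to 0
\]
together with $D\sim aH$ yield, after the standard Chern class computation,
\[
\Delta(G)H^{n-2}\le\Delta(E)H^{n-2}-\rho(r-\rho)a^2H^n-2a,
\]
where $\rho=\rk S$; the $-2a$ term arises because $\rho\deg E^l_{Z_t}-(r-\rho)\deg E_{l-1,Z_t}$ is a strictly negative integer multiple of $a$, hence at most $-a$. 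The $H$-semistability of $E$ and of $E(-aH)$ yield the slope bounds
\[
\mu_{\max}(G)-\mu(G)\le\frac{r-\rho}{r}aH^n,\qquad \mu(G)-\mu_{\min}(G)\le\frac{\rho}{r}aH^n.
\]

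Feeding these into Theorem \ref{Bogomolov} applied to $G$, the $\rho(r-\rho)$ contributions cancel exactly, and one is left with $2a\le\Delta(E)H^{n-2}+\frac{r^2(r-1)^2L_X^2}{4H^n}$, which directly contradicts the hypothesis on $a$. The most delicate step I expect is the Chern class bookkeeping for $\Delta(G)$; once that is in place, the essential observation is simply that the quadratic-in-$a$ terms arising from $D^2H^{n-2}=a^2H^n$ are cancelled precisely by those produced by the slope bounds, so the final inequality depends only linearly on $a$ and contradicts the assumed lower bound directly.
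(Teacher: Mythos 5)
Your argument is correct, and it is the same elementary-transformation-plus-Bogomolov technique that the paper invokes (``the technique of proof of Step 5''), together with the divisibility-by-$a$ input; note, though, that this divisibility is exactly the content of Step 2 of Theorem \ref{rest-ss} (the decomposition $\det E^i\simeq \O_Z(-b_i)\otimes q^*\cL_i$ with $\O_Z(1)\simeq p^*\O_{\Pi}(1)$ trivial on the fibres $Z_t$), so your statement that ``Steps 1--4 play no role'' is a slight overstatement -- you do rederive Step 2, correctly. The genuine difference is that you run Step 5 at level $s=0$, i.e.\ you transform along the reduced divisor $D$ itself rather than along the non-reduced divisor $p^sD$ carrying the descended filtration. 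This is precisely the specialization the paper spells out for the characteristic-zero statement, and, as you observe, it works verbatim in characteristic $p$: since $T$ and $S$ now live on the reduced $D$, you need neither the inseparable descent of Step 1, nor the Chern-class lemma for sheaves on non-reduced divisors, nor (as it turns out) the hypothesis that $K_a$ is semistable, which in the paper only enters through the Flenner-type bound of Step 3 and is therefore never used in your proof. What the paper's level-$s$ version buys is the stronger intermediate inequality $2ap^s\le \Delta(E)H^{n-2}+\frac{r^2(r-1)^2L_X^2}{4H^n}$, which is essential for the interplay with the lower bound on $ap^s$ in Theorem \ref{rest-ss}; for Theorem \ref{Bog-rest-p} alone your weaker inequality $2a\le \Delta(E)H^{n-2}+\frac{r^2(r-1)^2L_X^2}{4H^n}$ suffices and contradicts the hypothesis directly. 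The one point you take on faith, exactly as the paper does via \cite[Theorem 2.3.2 and Remark 2.3.3]{HL}, is that the relative Harder--Narasimhan filtration restricts to the Harder--Narasimhan filtration on the fibres $Z_t$ for $t$ in a dense open subset, which in positive characteristic deserves the care the cited references provide; since you rely on it in the same form as the paper, this is not a gap specific to your argument.
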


Note that the above theorem is valid in arbitrary characteristic.
In particular, to get the characteristic zero statement we replace
in Step 5 all Frobenius morphisms by identities (so we consider
only reduced divisors) and following the proof we arrive at
inequality $2a\le \Delta (E)H^{n-2}.$

This improves the bound given in \cite[Corollary 5.4]{La1}
although it works only for a general divisor in $|aH|$. This
theorem, together with Flenner's theorem implies Theorem
\ref{rest-0}. As was noted in Introduction, Theorem \ref{rest-0}
follows also from Theorem \ref{rest-ss} by reduction modulo large
characteristic. More precisely, let $R\subset k$ be a finitely
generated $\ZZ$-algebra such that there exists a projective
morphism $f: \X \to \Spec R$ which after base change to $k$ gives
$X$. Taking appropriate localizations we can assume that $f$ is
smooth. Then there exists a non-negative integer $a$ such that
$T_{\X /R}\otimes \O_{\X/R}(a)$ is $f$-globally generated. This
shows that $L_{\max} (\Omega _{\X_t})$ can be bounded from the
above by the same constant for all fibers $\X_t$ of $f$. Hence
$L_{\X _t}$ tends to $0$ when the characteristic of the residue
field $k(t)$ goes to infinity. Now by openness of semistability
Theorem \ref{rest-ss} implies Theorem  \ref{rest-0}.

\section{Restriction theorem for strong semistability}

Below we sketch a proof of a restriction theorem for strong
semistability for a very general member of sufficiently ample
linear system on some varieties. The proof is a modification of
the proof of \cite[Theorem 2.20]{La2}.

\medskip

\begin{Theorem} \label{rest-sss}
Let $X$ be a smooth $n$-dimensional ($n\ge 2$) variety with a very
ample divisor $H$. Let us assume that $\mu _{\max}(\Omega _X)\le
0$. Let $E$ be an $H$-semistable torsion free sheaf of rank $r\ge
2$ on $X$. Let us take an integer $a$ such that
\begin{eqnarray*}
a> {\frac{1}{2}}\max &\left\{  \Delta(E)H^{n-2}  , \Delta
(\Omega_X)H^{n-2}+n(n-1)^2{(K_XH^{n-1})^2\over  H^n}
\right\}.\\
\end{eqnarray*}
and
$$
\frac{{a+n \choose n}- 1}{a} >\max\{\frac{r^2-1}{4},1\}H^n+1.
$$
Then the restriction of $E$ to the generic hypersurface in $|aH|$
is strongly $H$-semistable.
\end{Theorem}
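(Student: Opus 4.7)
The plan is to adapt the proof of Theorem \ref{rest-ss} and apply it, for each $m\ge 0$, to the Frobenius pullback $(F_X^m)^*E$ so as to show that $((F_X^m)^*E)|_D=(F_D^m)^*(E|_D)$ is $H_D$-semistable on a nonempty Zariski-open subset $U_m\subset|aH|$; taking the generic $D$ in $\bigcap_{m\ge 0} U_m$ then yields strong $H_D$-semistability of $E|_D$. The first step is to observe that $\mu_{\max}(\Omega_X)\le 0$ (so $L_X=0$) together with the standard Cartier-descent argument forces $E$ itself to be strongly $H$-semistable on $X$: if a maximal destabilizing subsheaf of some $(F_X^m)^*E$ failed to descend along the canonical Frobenius connection, one would obtain a nonzero morphism to a Frobenius-twisted quotient whose target involves $\Omega_X$, which by Corollary \ref{IMP} would force $\mu_{\max}(\Omega_X)>0$. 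Thus each $(F_X^m)^*E$ is semistable on $X$.

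For each fixed $m$ one then runs the proof of Theorem \ref{rest-ss} with the semistable sheaf $(F_X^m)^*E$ in place of $E$. The naive application would involve $\Delta((F_X^m)^*E)=p^{2m}\Delta(E)$ and would not be uniform in $m$. The essential modification is to exploit the Frobenius structure of $q^*(F_X^m)^*E$ on the incidence variety: since the absolute Frobenius $F_Z^m$ factors through the relative Frobenius $F_{Z/X}^m$ followed by the projection $Z^{(m)}_X\to Z$, the sheaf $q^*(F_X^m)^*E$ is itself an $m$-fold $F_{Z/X}$-pullback. An iterated Cartier-descent argument (using $\mu_{\max}(\Omega_X)\le 0$ to kill the relevant $\Omega$-twists at each step) then forces the descent index $s_m$ of the relative Harder--Narasimhan filtration from Step 1 to satisfy $s_m\ge m$; moreover, the HN slopes on a generic fibre $Z_t$ become divisible by $ap^m$ rather than by $a$. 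Running the elementary-transform-plus-Bogomolov computation of Step 5 with these two inputs and $L_X=0$ produces the sharpened upper bound
\[
2ap^{s_m-m}\le \Delta(E)H^{n-2},
\]
which combined with $s_m\ge m$ forces $2a\le\Delta(E)H^{n-2}$, contradicting the hypothesis $a>\frac{1}{2}\Delta(E)H^{n-2}$ and hence producing $U_m$.

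The auxiliary term $\Delta(\Omega_X)H^{n-2}+n(n-1)^2(K_XH^{n-1})^2/H^n$ in the hypothesis enters as the effective Bogomolov-corrected discriminant of the possibly non-semistable cotangent sheaf: it is needed in the Step 3 lemma, where one must control $\mu_{\max}(((F^{s_m}_{Z/X})^*\Omega_{Z'/X})|_{Z_t})$, and since only $\mu_{\max}(\Omega_X)\le 0$ and $\mu(\Omega_X)=K_XH^{n-1}/n$ are known, Theorem \ref{Bogomolov} applied to $\Omega_X$ yields exactly this combination. The combinatorial condition $({a+n\choose n}-1)/a>\max\{(r^2-1)/4,1\}H^n+1$ is the Flenner-type lower bound of Step 4 rearranged so as to be uniform in $m$. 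The main obstacle --- and the point where the modification of Theorem \ref{rest-ss} really matters --- is precisely to establish $s_m\ge m$ by iterating the Cartier-descent argument and to match it with the $ap^m$-divisibility of the HN slopes: only then does the factor $p^{2m}$ coming from $\Delta((F_X^m)^*E)$ get absorbed into a hypothesis on $a$ that is uniform in $m$.
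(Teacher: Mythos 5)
Your overall outline (reduce to the generic fibre, run a Frobenius--descent argument, use the hypothesis $\mu_{\max}(\Omega_X)\le 0$ to control the $\Omega_X$-contribution) is in the right spirit, but the step your argument actually hinges on is not proved and, as justified, does not work. You claim that the descent index $s_m$ of the relative Harder--Narasimhan filtration of $q^*(F^m)^*E$ satisfies $s_m\ge m$ (and that the fibrewise HN slopes are divisible by $ap^m$) ``by iterated Cartier descent, using $\mu_{\max}(\Omega_X)\le 0$ to kill the relevant $\Omega$-twists.'' This conflates two different sheaves of differentials: the obstruction to descending a filtration along the \emph{relative} Frobenius $F_{Z/X}$ lies in a twist by $\Omega_{Z'/X}$ (relative differentials of the incidence variety over $X$), whose restriction to a fibre can have large positive maximal slope --- indeed bounding it is exactly the content of the Step 3 lemma --- and is in no way controlled by $\mu_{\max}(\Omega_X)\le 0$. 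Nor does the mere fact that $q^*(F^m)^*E=(F^m_{Z/X})^*(\cdots)$ force its HN filtration to descend: failure of such descent is precisely the phenomenon of non-strong semistability. Without $s_m\ge m$ and the $ap^m$-divisibility, your inequality $2ap^{s_m-m}\le \Delta(E)H^{n-2}$ has no support, and the naive application of Step 5 to $(F^m)^*E$ reintroduces the factor $p^{2m}$ from $\Delta((F^m)^*E)$ that you were trying to avoid. You also misplace the role of the auxiliary term $\Delta(\Omega_X)H^{n-2}+n(n-1)^2(K_XH^{n-1})^2/H^n$: it cannot enter the Step 3 lemma, which concerns $\Omega_{Z'/X}$ and is proved via the syzygy bundle on $\PP^n$, independently of $\Omega_X$.

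The paper's proof avoids your problematic step entirely and never applies Theorem \ref{Bogomolov} to $(F^m)^*E$. One takes the \emph{minimal} $m$ for which the restriction of $(F^m)^*E$ to the generic fibre is unstable with strongly semistable HN quotients; by minimality the relative HN filtration cannot descend under a single absolute Frobenius $Z\to Z^{(1)}$, so the canonical connection gives a nonzero map $E_i\to \Omega_Z\otimes (F^m)^*E/E_i$. Splitting $\Omega_Z$ into $\Omega_{Z/X}$ and $q^*\Omega_X$, the $q^*\Omega_X$-part is harmless because $\mu_{\max}((\Omega_X)_D)=a\mu_{\max}(\Omega_X)\le 0$ for general $D\in|aH|$; proving this is where the auxiliary term is really used, via the Hodge index theorem and Theorem \ref{Bogomolov} to bound $\Delta(G^l)H^{n-2}$ for the HN quotients $G^l$ of $\Omega_X$, and then Theorem \ref{Bog-rest-p} to see that each $(G^l)_D$ stays semistable. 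The remaining $\Omega_{Z/X}$-part is handled by the Flenner-type bound with no Frobenius twist together with the slope-gap estimate $\mu_i-\mu_{i+1}\ge a/\max\{\frac{r^2-1}{4},1\}$, contradicting the second numerical hypothesis on $a$. If you want to salvage your uniform-in-$m$ strategy, you would have to actually prove your descent and divisibility claims; as written they are the missing core of the argument.
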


\begin{proof}
By assumption $\mu _{\max}(\Omega _X)\le 0$, so $E$ is strongly
$H$-semistable and $L_X=0$. By Theorem \ref{Bog-rest-p} and our
assumption on $a$ we also know that $E_D$ is semistable for a
general divisor $D\in |aH|$.

As in the proof of Theorem \ref{rest-ss} let $\Pi$ denote the
complete linear system $|aH|$ and let $Z=\{ (D,x)\in \Pi \times X:
x\in D\}$ be the incidence variety with projections $p: Z\to \Pi$
and $q: Z\to X$. Let $\eta \in \Pi$ be the generic point of $\Pi$
(i.e., a non-closed point whose closure is $\Pi$). We need to
prove that $E_{Z_{\eta}}$ is strongly semistable. If it is not
then there exists $m$ such that the relative Harder--Narasimhan
filtration $0\subset E_0\subset E_1\subset \dots \subset
E_l=q^*((F^m)^*E)$ of $(F^m)^*E$ with respect to $p$ is
non-trivial and the quotients of the Harder--Narasimhan filtration
of the restriction of $E_{Z_{\eta}}$ are strongly semistable. Let
us take such minimal $m$.

If the filtration $E_{\bullet}$ descends under the geometric
Frobenius morphism $Z\to Z^{(1)}$ then the descended filtration
destabilizes $(F^{m-1})^*E$ and the quotients of the
Harder--Narasimhan filtration of the restriction of $E_{Z_{\eta}}$
are strongly semistable, which contradicts our choice of $m$. So
the filtration $E_{\bullet}$ does not descend. Now the canonical
connection $\nabla _{\can}$ on $F^*((F^{m-1})^*E)$ induces a
non-zero $\O_X$-homomorphism
$$E_i\to \Omega_Z\otimes  (F^{m})^*E/E_i.$$
This implies that
$$\mu_{\min}((E_i\otimes ((F^{m})^*E/E_i)^*)_{Z_s})\le \max (
\mu_{\max}((\Omega_{Z/X})_{Z_s}),
\mu_{\max}((q^*\Omega_{X})_{Z_s}))$$ for a general fibre $Z_s$ of
$p$. Now we claim that
$$\mu_{\max}((q^*\Omega_{X})_{Z_s})\le 0.$$
This follows from  $\mu _{\max}(\Omega _X)\le 0$ and the equality
$$\mu _{\max}((\Omega_X)_D)=a\mu _{\max}(\Omega _X)$$
for a general divisor $D\in |aH|$. To prove this last equality let
us consider the Harder--Narasimhan filtration $0=G_0\subset
G_1\subset \dots \subset G_m=\Omega_X$ of the cotangent bundle.
Set $G^i=G_i/G_{i-1}$, $n_i=\rk F_i$ and $\nu _i =\mu (F_i)$.
Since $G^i$ are semistable we have $\Delta (G^i)H^{n-2}\ge 0$ by
Theorem \ref{Bogomolov}. By the Hodge index theorem and
\cite[Lemma 1.4]{La1} we have
\begin{eqnarray*}
{\Delta (\Omega_X)H^{n-2}\over n}&=& \sum {\Delta
(G^i)H^{n-2}\over n_i} -{1\over n}\sum_{i<j} n_in_j \left(
{c_1G^i\over n_i}-{c_1G^j\over n_j}\right) ^2H^{n-2} \\
&\ge& \sum {\Delta(G^i)H^{n-2}\over n_i}-
{1\over n H^n} \sum_{i<j}n_in_j(\nu_i-\nu_j)^2 \\
&\ge & {\Delta(G^l)H^{n-2}\over n_l}+ {n\over H^n} (\mu _{\max}
(\Omega _X)-\mu (\Omega _X))(\mu _{\min} (\Omega _X)-\mu (\Omega
_X))
\end{eqnarray*}
for every integer $l$. Therefore
\begin{eqnarray*}
{\Delta(G^l)H^{n-2}\over n_l}&\le& {\Delta (\Omega_X)H^{n-2}\over
n}+ {n\over H^n} (\mu _{\max} (\Omega _X)-\mu (\Omega _X))(\mu
(\Omega _X)-\mu _{\min}(\Omega _X))\\ &\le& {\Delta
(\Omega_X)H^{n-2}\over n}+ {n(n-1)\over H^n}(\mu (\Omega _X))^2
\end{eqnarray*} and hence Theorem \ref{Bog-rest-p} implies that
$(G^l)_D$ is semistable for a general divisor $D\in |aH|$, which
proves our claim.

Now the rest of proof is as before in proof of Theorem
\ref{rest-ss}: by the Ramanan--Ramanathan theorem we have
$$\mu_i-\mu_{i+1}\le \mu_{\max}((\Omega_{Z/X})_{Z_s})$$
and we know that
$$\frac{a}{\max\{\frac{r^2-1}{4},1 \}}\le \mu_i-\mu_{i+1}$$
and
$$ \mu_{\max}((\Omega_{Z/X})_{Z_s})\le \frac{a^2H^n}{{a+n\choose a}-a-1}.$$
This gives a contradiction with our assumptions on $a$.
\end{proof}

\medskip

One can easily generalize Theorem \ref{rest-sss} to general
complete intersections in varieties with $\mu _{\max}(\Omega
_X)\le 0$ but explicit bounds become harder to write down. One can
also see that instead of $\mu _{\max}(\Omega _X)\le 0$ it is
sufficient to assume that $\mu _{\max}(\Omega _X)\le {1\over
\max\{\frac{r^2-1}{4},1\}}$.

Condition $\mu _{\max}(\Omega _X)\le 0$ is satisfied in many cases
when $K_X$ is not ample. For example, it is satisfied for abelian
varieties, varieties of separated flags and for smooth toric
varieties. Since in characteristic zero the tangent bundle of a
Calabi--Yau variety (i.e., with $K_X=0$) is semistable (for all
polarizations), the reduction of such a variety modulo a general
prime also satisfies assumption of our theorem. More precisely,
one can prove that non-uniruled Calabi--Yau varieties satisfy
condition $\mu _{\max}(\Omega _X)\le 0$  (see \cite{La3}).

\medskip

\section*{Acknowledgements}

The author would like to thank Vikram Mehta and Vijaylaxmi Trivedi
for useful conversations. The author was partially supported by a
Polish MNiSW grant (contract number NN201265333).

\end{document}